\providecommand{\tabularnewline}{\\}
\newcommand{\lyxaddress}[1]{
\par {\raggedright #1
\vspace{1.4em}
\noindent\par}
}
\theoremstyle{plain}
\newtheorem{thm}{\protect\theoremname}
\newenvironment{proof}[1][\protect\proofname]{\par
\normalfont\topsep6\p@\@plus6\p@\relax
\trivlist
\itemindent\parindent
\item[\hskip\labelsep
\scshape
#1]\ignorespaces
}{%
\endtrivlist\@endpefalse
}
\providecommand{\proofname}{Proof}
\theoremstyle{plain}
\newtheorem{cor}[thm]{\protect\corollaryname}
\providecommand{\corollaryname}{Corollary}
\providecommand{\theoremname}{Theorem}
\begin{document}

\title{\noindent Congruent conditions on the number of terms, on the ratio
number of terms to first terms and on the difference of first terms
for sums of consecutive squared integers equal to squared integers}

\author{\noindent Vladimir Pletser}

\maketitle

\lyxaddress{\noindent European Space Research and Technology Centre, ESA-ESTEC
P.O. Box 299, NL-2200 AG Noordwijk, The Netherlands E-mail: Vladimir.Pletser@esa.int }
\begin{abstract}
\noindent {\normalsize{Sums of $M$ consecutive squared integers $\left(a+i\right)^{2}$
equaling squared integers (for $a\geq1$, $0\leq i\leq M-1$) yield
certain linear groupings of pairs $\left(a_{1},a_{2}\right)$ of $a$
values for successive same values of $M$ when these are linked by
$a_{1}+a_{2}=\mu M+1$ with $\mu=\left(\eta/\delta\right)\in\mathbb{\mathbb{Q}}^{+}$.
In this paper, congruent conditions on $M,\eta,\delta$, and on the
difference $\left(a_{2}-a_{1}\right)$ are demonstrated for these
linear groupings to hold. It is found that $\eta\equiv1\left(mod\,2\right)$}}
{\normalsize{and $\delta\equiv0,1$ or $5\left(mod\,6\right)$, and
if $\delta\equiv0\left(mod\,6\right)$, $M\equiv0\left(mod\,12\right)$,
while if $\delta\equiv1$ or $5\left(mod\,6\right)$, $M\equiv2$
or $11\left(mod\,12\right)$ with $a_{1}$ and $a_{2}$ being of different
or same parities.}}{\normalsize \par}

\noindent \textbf{Keywords}: Sums of consecutive squared integers
equal to square integers ; Congruence
\end{abstract}
MSC2010 : 11E25 ; 11A07

\section{\noindent Introduction}

\noindent Finding all values of $a\geq1$ for which the sum of $M>1$
consecutive integer squares starting from $a^{2}$ equals an integer
square $s^{2}$ was addressed by several authors since Lucas proposed
the initial problem for $a=1$ in 1873 (see e.g. \cite{key-1-24,key-1-15,key-1-17,key-11,key-1-23,key-1-26,key-1-4}).
More recently, the present author showed \cite{key-10-1,key-10-2}
that there are no integer solutions if $M\equiv3,5,6,7,8$ or $10\left(mod\,12\right)$;
that there are integer solutions if $M$ is not a square integer and
is congruent to $0,9,24$ or $33\left(mod\,72\right)$, or to $1,2$
or $16\left(mod\,24\right)$, or to $11\left(mod\,12\right)$; and
if $M$ is a square integer and congruent to $1\left(mod\,24\right)$.

\noindent Interestingly, for certain values of $a$ and $M$, one
finds linear groupings of pairs $\left(a_{1},a_{2}\right)$ of $a$
values for successive same values of $M$ when these values are linked
by the relation $a_{1}+a_{2}=\mu M+1$ with $\mu=\left(\eta/\delta\right)\in\mathbb{\mathbb{Q}}^{+}$
\cite{key-5-2}. 

\noindent In this paper, congruent conditions on $M,\eta,\delta$,
and on the difference $\left(a_{2}-a_{1}\right)$ are demonstrated
for these groupings to hold.

\section{\noindent Pairs of $a$ values}

\noindent For $1\leq j\leq2,i,\eta,\delta,M_{\mu,k}>1,a_{j,\mu,k}\in\mathbb{Z}^{*}$,
$k,s_{j,\mu,k}\in\mathbb{Z}$, let $\mu=\left(\eta/\delta\right)\in\mathbb{\mathbb{Q}}^{+}$
forming an irreducible fraction. Two values $a_{1,\mu,k},a_{2,\mu,k}$
form a pair of $a_{j,\mu,k}$ values for a same value of $M_{\mu,k}$
if 
\begin{eqnarray}
a_{1,\mu,k}+a_{2,\mu,k} & = & \mu M_{\mu,k}+1\\
a_{2,\mu,k}-a_{1,\mu,k} & = & f_{\mu,k}
\end{eqnarray}
It was demonstrated \cite{key-5-2} that if 
\begin{equation}
M_{\mu,k}=\frac{\delta^{2}\left(3f_{\mu,k}^{2}-1\right)}{3\left(\eta+\delta\right)^{2}+\delta^{2}}\label{eq:20}
\end{equation}

\noindent then the sums of $M_{\mu,k}$ consecutive squared integers
$\left(a_{j,\mu,k}+i\right)^{2}$ always equal squared integers $s_{j,\mu,k}^{2}$
for pairs of $a_{j,\mu,k}$ values

\noindent 
\begin{equation}
\sum_{i=0}^{M-1}\left(a_{j,\mu,k}+i\right)^{2}=M_{\mu,k}\left[\left(a+\frac{M_{\mu,k}-1}{2}\right)^{2}+\frac{M_{\mu,k}^{2}-1}{12}\right]\label{eq:5}
\end{equation}

\noindent $\forall k\in\mathbb{Z}$ and where $j=1$ or $2$.

\section{\noindent Congruent values of $M_{\mu,k}$, $f_{\mu,k}$ and $\mu=\left(\eta/\delta\right)$}

\noindent These results hold only for certain allowed values of $M_{\mu,k}$,
of $f_{\mu,k}$ and of $\mu=\left(\eta/\delta\right)\in\mathbb{\mathbb{Q}}^{+}$,
that can be determined as follows. Relation (\ref{eq:20}) reads also
\begin{equation}
\left(\delta f_{\mu,k}\right)^{2}-M_{\mu,k}\left(\eta+\delta\right)^{2}=\delta^{2}\left(\frac{M_{\mu,k}+1}{3}\right)\label{eq:41-1-1}
\end{equation}
As the sum of $M$ consecutive integer squares can only be equal to
a squared integer if $M\equiv0\left(mod\,12\right)$ (more precisely
$M\equiv0$ or $24\left(mod\,72\right)$) or if $M\equiv1,2$ or $4\left(mod\,12\right)$
(more precisely $M\equiv1,2$ or $16\left(mod\,24\right)$), or if
$M\equiv9\left(mod\,12\right)$ (more precisely $M\equiv9$ or $33\left(mod\,72\right)$),
or if $M\equiv11\left(mod\,12\right)$ \cite{key-10-1,key-10-2},
the following theorem constrains the congruent values of $\eta,\delta,M_{\mu,k}$
and $f_{\mu,k}$.
\begin{thm}
\noindent For $\eta,\delta,M_{\mu,k}>1,f_{\mu,k}\in\mathbb{Z}^{+}$,
$k\in\mathbb{Z}$, for (\ref{eq:41-1-1}) to hold:

\noindent \textup{$\eta\equiv1\left(mod\,2\right)$} and $\delta\equiv0,1$
or $5\left(mod\,6\right)$, and

\noindent - if $\delta\equiv0\left(mod\,6\right)$, $M_{\mu,k}\equiv0\left(mod\,12\right)$, 

\noindent - if $\delta\equiv1$ or $5\left(mod\,6\right)$, $M_{\mu,k}\equiv2$
or $11\left(mod\,12\right)$ for $f_{\mu,k}\equiv1$ or $0\left(mod\,2\right)$; 

\noindent more precisely, $\eta,\delta,M_{\mu,k}$ and $f_{\mu,k}$
are congruent to the values of Table 1.
\begin{table}
\begin{centering}
\caption{Congruent values of $\eta,\delta,M_{\mu,k}$ and $f_{\mu,k}$ for
$\delta\equiv0\left(mod\,6\right)$ and $\delta\equiv1$ or $5\left(mod\,6\right)$ }

\par\end{centering}

\begin{centering}
\begin{tabular}{|c|c||c||c|}
\hline 
$\delta\equiv$ & $\eta\equiv$ & $f_{\mu,k}\equiv$ & $M_{\mu,k}\equiv$\tabularnewline
\hline 
\hline 
$0\left(mod\,36\right)$ & $1$ or $5\left(mod\,6\right)$ & $\forall$ & $0\left(mod\,144\right)$ \tabularnewline
\cline{1-1} \cline{4-4} 
$12$ or $24\left(mod\,36\right)$ &  &  & $96\left(mod\,144\right)$\tabularnewline
\hline 
$6$ or $30\left(mod\,36\right)$ & $1$ or $5\left(mod\,6\right)$ & $1\left(mod\,2\right)$ & $24\left(mod\,144\right)$\tabularnewline
\cline{1-1} \cline{4-4} 
$18\left(mod\,36\right)$ &  &  & $72\left(mod\,144\right)$\tabularnewline
\hline 
\hline 
$1\left(mod\,6\right)$ & $1$ or $3\left(mod\,6\right)$ & $1$ or $5\left(mod\,6\right)$ & $50\left(mod\,72\right)$\tabularnewline
\cline{2-2} \cline{4-4} 
 & $5\left(mod\,6\right)$ &  & $2\left(mod\,72\right)$\tabularnewline
\cline{2-4} 
 & $1$ or $3\left(mod\,6\right)$ & $3\left(mod\,6\right)$ & $2\left(mod\,72\right)$\tabularnewline
\cline{2-2} \cline{4-4} 
 & $5\left(mod\,6\right)$ &  & $26\left(mod\,72\right)$\tabularnewline
\hline 
$5\left(mod\,6\right)$ & $1\left(mod\,6\right)$ & $1$ or $5\left(mod\,6\right)$ & $2\left(mod\,72\right)$\tabularnewline
\cline{2-2} \cline{4-4} 
 & $3$ or $5\left(mod\,6\right)$ &  & $50\left(mod\,72\right)$\tabularnewline
\cline{2-4} 
 & $1\left(mod\,6\right)$ & $3\left(mod\,6\right)$ & $26\left(mod\,72\right)$\tabularnewline
\cline{2-2} \cline{4-4} 
 & $3$ or $5\left(mod\,6\right)$ &  & $2\left(mod\,72\right)$\tabularnewline
\hline 
$1\left(mod\,6\right)$ & $1$ or $3\left(mod\,6\right)$ & $0\left(mod\,6\right)$ & $11\left(mod\,36\right)$\tabularnewline
\cline{2-2} \cline{4-4} 
 & $5\left(mod\,6\right)$ &  & $35\left(mod\,36\right)$\tabularnewline
\cline{2-4} 
 & $1$ or $3\left(mod\,6\right)$ & $2$ or $4\left(mod\,6\right)$ & $23\left(mod\,36\right)$\tabularnewline
\cline{2-2} \cline{4-4} 
 & $5\left(mod\,6\right)$ &  & $11\left(mod\,36\right)$\tabularnewline
\hline 
$5\left(mod\,6\right)$ & $1\left(mod\,6\right)$ & $0\left(mod\,6\right)$ & $35\left(mod\,36\right)$\tabularnewline
\cline{2-2} \cline{4-4} 
 & $3$ or $5\left(mod\,6\right)$ &  & $11\left(mod\,36\right)$\tabularnewline
\cline{2-4} 
 & $1\left(mod\,6\right)$ & $2$ or $4\left(mod\,6\right)$ & $11\left(mod\,36\right)$\tabularnewline
\cline{2-2} \cline{4-4} 
 & $3$ or $5\left(mod\,6\right)$ &  & $23\left(mod\,36\right)$\tabularnewline
\hline 
\end{tabular}
\par\end{centering}

\begin{centering}
i.e. for $\delta\equiv0\left(mod\,36\right)$ and $\eta\equiv1$ or
$5\left(mod\,6\right)$, $\forall f_{\mu,k}$ and $M_{\mu,k}\equiv0\left(mod\,144\right)$; 
\par\end{centering}

\centering{}for $\delta\equiv1\left(mod\,6\right)$ and $\eta\equiv1$
or $3\left(mod\,6\right)$, $f_{\mu,k}\equiv1$ or $5\left(mod\,6\right)$
and $M_{\mu,k}\equiv50\left(mod\,72\right)$ 
\end{table}
\end{thm}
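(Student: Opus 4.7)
The plan is to treat the identity~(\ref{eq:41-1-1}) as a single divisibility constraint and combine it with the list of admissible residues of $M_{\mu,k}$ modulo~$12$ recalled from \cite{key-10-1,key-10-2} in order to extract the congruences of the statement step by step. After clearing the denominator, (\ref{eq:41-1-1}) rewrites as
\[
\delta^{2}\bigl(3f_{\mu,k}^{2}-M_{\mu,k}-1\bigr)=3M_{\mu,k}\left(\eta+\delta\right)^{2}.
\]
Since $\eta/\delta$ is irreducible, $\gcd(\eta,\delta)=1$, hence $\gcd(\delta,\eta+\delta)=1$ and $\gcd\bigl(\delta^{2},(\eta+\delta)^{2}\bigr)=1$; this forces $\delta^{2}\mid 3M_{\mu,k}$ and $(\eta+\delta)^{2}\mid 3f_{\mu,k}^{2}-M_{\mu,k}-1$. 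This rearranged identity is the engine of the whole proof, and everything that follows is a reduction of it modulo a product $2^{a}3^{b}$.

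Next I would extract the broad claims $\eta\equiv1\left(mod\,2\right)$ and $\delta\equiv 0,1$ or $5\left(mod\,6\right)$. The cases $\delta\equiv 2,4\left(mod\,6\right)$ are ruled out by combining $\delta^{2}\mid M_{\mu,k}$ (which follows from $\gcd(\delta,3)=1$) with $4\mid\delta^{2}$ to force $M_{\mu,k}\equiv 0\left(mod\,12\right)$, while reading the identity modulo~$3$ forces $M_{\mu,k}\equiv 2\left(mod\,3\right)$---a contradiction. The case $\delta\equiv 3\left(mod\,6\right)$ is ruled out by writing $\delta=3\delta'$ and deducing $3\mid M_{\mu,k}$, leaving only $M_{\mu,k}\equiv 0$ or $9\left(mod\,12\right)$, each of which gives a contradiction modulo~$4$. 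When $\delta$ is even the claim $\eta\equiv 1\left(mod\,2\right)$ is immediate from coprimality; when $\delta$ is odd, reducing the identity modulo~$4$ against the admissible residues of $M_{\mu,k}\left(mod\,12\right)$ rules out $\eta$ even. Finally the mod-$3$ reduction pins $M_{\mu,k}\left(mod\,3\right)$ in each surviving regime, which combined with the admissible list fixes $M_{\mu,k}\left(mod\,12\right)$ as stated.

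I would then refine each regime to produce the individual rows of Table~1 by iterating the reduction modulo~$8$, $9$, $36$, $72$ and $144$. For instance, in the regime $\delta\equiv 0\left(mod\,6\right)$, writing $\delta=6\delta'$ turns the identity into $12(\delta')^{2}(3f_{\mu,k}^{2}-M_{\mu,k}-1)=M_{\mu,k}(\eta+\delta)^{2}$, and examining $\delta'\left(mod\,6\right)$ against the coprime residues of $\eta\left(mod\,6\right)$ isolates $M_{\mu,k}\left(mod\,144\right)$; in the regime $\gcd(\delta,6)=1$, a mod-$72$ analysis handles $f_{\mu,k}$ coprime to~$6$ and a mod-$36$ analysis handles the remaining parities of $f_{\mu,k}$, which explains the two differently-normalized halves of Table~1.

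The main obstacle will be the sheer combinatorial bookkeeping required to fill and justify every row of Table~1: for each of the two regimes there are several sub-regimes of $\delta$ (modulo~$36$ or modulo~$6$) combined with the admissible residues of $\eta\left(mod\,6\right)$ and of $f_{\mu,k}\left(mod\,6\right)$, yielding the sixteen rows, each of which must be verified for necessity and to exclude spurious combinations. Particular care is needed to track whether $3\mid(\eta+\delta)$, since this switches the placement of the factor~$3$ in the rearranged identity from $\delta^{2}$ to $(\eta+\delta)^{2}$, and it is this bifurcation that ultimately distinguishes the moduli $144$, $72$ and $36$ appearing in the table.
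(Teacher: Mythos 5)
Your plan is sound and, at its core, is the same exhaustive congruence bookkeeping the paper performs: reduce (\ref{eq:41-1-1}) modulo $12$ and its refinements $36$, $72$, $144$, and intersect the outcome with the admissible residues $M_{\mu,k}\equiv0,1,2,4,9$ or $11\left(mod\,12\right)$ (refined to $0$ or $24\left(mod\,72\right)$, $2\left(mod\,24\right)$, $9$ or $33\left(mod\,72\right)$, $11\left(mod\,12\right)$). Where you genuinely diverge is in how the forbidden residues of $\delta$ are disposed of: your rearrangement $\delta^{2}\left(3f_{\mu,k}^{2}-M_{\mu,k}-1\right)=3M_{\mu,k}\left(\eta+\delta\right)^{2}$ together with the coprimality deduction $\delta^{2}\mid3M_{\mu,k}$ is essentially Corollary 1 of the paper imported up front, and using it to kill $\delta\equiv2$ or $4\left(mod\,6\right)$ (via $4\mid M_{\mu,k}$ against $M_{\mu,k}\equiv2\left(mod\,3\right)$) and $\delta\equiv3\left(mod\,6\right)$ (via a mod-$4$ contradiction) is arguably cleaner than the paper's route, which rejects the former because the surviving solutions force $\eta$ even, contradicting irreducibility (case (iii.2)), and the latter by showing $M_{\mu,k}\equiv3$ or $6\left(mod\,12\right)$ (case (ii)). Two small corrections to your roadmap: first, $4\mid\delta^{2}\mid M_{\mu,k}$ gives $M_{\mu,k}\equiv0$ or $4\left(mod\,12\right)$, not necessarily $0\left(mod\,12\right)$ --- the contradiction with $M_{\mu,k}\equiv2\left(mod\,3\right)$ survives either way; second, the split of the $\gcd\left(\delta,6\right)=1$ half of Table 1 into a mod-$72$ block and a mod-$36$ block is governed by the parity of $f_{\mu,k}$, not by coprimality to $6$ --- the rows with $f_{\mu,k}\equiv3\left(mod\,6\right)$ sit in the mod-$72$ block. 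The bulk of the theorem, namely the row-by-row content of Table 1, you correctly identify as finite residue verification but do not carry out; that verification is exactly what occupies cases (i.1)--(i.4) and (iii.1.1)--(iii.1.2) of the paper's proof, so your proposal is an adequate skeleton rather than a complete argument.
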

\begin{proof}
\noindent For $\eta,\delta,d,M_{\mu,k}>1,f_{\mu,k},m,n\in\mathbb{Z}^{+}$,
$\lambda\in\mathbb{Z}^{*}$, $k\in\mathbb{Z}$, recalling that $n^{2}\equiv0,1,4$
or $9\left(mod\,12\right)$, $\forall n\in\mathbb{Z}^{+}$, and that
$M_{\mu,k}\equiv0,1,2,4,9$ or $11\left(mod\,12\right)$ \cite{key-10-1},

\noindent (i) Let $\delta\equiv0\left(mod\,6\right)$ $\Rightarrow$
$\delta^{2}\equiv0\left(mod\,12\right)$, then only $\left(\eta+\delta\right)^{2}\equiv1\left(mod\,12\right)$
holds for, if $\left(\eta+\delta\right)^{2}\equiv0,4$ or $9\left(mod\,12\right)$,
$\Rightarrow$ $\left(\eta+\delta\right)\left(mod\,6\right)\equiv\eta\left(mod\,6\right)\equiv0,\left(2,4\right)$
or $3\left(mod\,6\right)$ as $\delta\equiv0\left(mod\,6\right)$
and $\left(\eta/\delta\right)$ would not be an irreducible fraction;
therefore $\left(\eta+\delta\right)^{2}\equiv1\left(mod\,12\right)$
and $\eta\equiv1$ or $5\left(mod\,6\right)$.

\noindent Then (\ref{eq:41-1-1}) yields the congruence relation $M_{\mu,k}\equiv0\left(mod\,12\right)$,
and more precisely $M_{\mu,k}\equiv0$ or $24\left(mod\,72\right)$
\cite{key-10-1}, i.e. $\exists m\in\mathbb{Z}^{+}$ such that $M_{\mu,k}=24\left(3m+\lambda\right)$,
with $\lambda=0$ or $1$ if $M_{\mu,k}\equiv0$ or $24\left(mod\,72\right)$.

\noindent As $\delta\equiv0\left(mod\,6\right)$, $\exists d\in\mathbb{Z}^{+}$
such that $\delta=6d$ and $\delta^{2}=36d^{2}$. Then (\ref{eq:41-1-1})
reads after dividing by $12$,
\begin{equation}
3d^{2}f_{\mu,k}^{2}-2\left(3m+\lambda\right)\left(\eta+6d\right)^{2}=d^{2}\left(24\left(3m+\lambda\right)+1\right)\label{eq:27-1}
\end{equation}
that yields the congruence relation
\begin{equation}
3d^{2}f_{\mu,k}^{2}-2\left(3m+\lambda\right)-d^{2}\equiv0\left(mod\,12\right)\label{eq:35-3}
\end{equation}

\noindent (i.1) If $d^{2}\equiv0\left(mod\,12\right)$, i.e. $d\equiv0\left(mod\,6\right)$,
then (\ref{eq:35-3}) reduces to 

\noindent $10\left(3m+\lambda\right)\equiv0\left(mod\,12\right)$
$\Rightarrow$ $\left(3m+\lambda\right)\equiv0\left(mod\,6\right)$
which cannot hold if $\lambda=1$ and which yields $m\equiv0\left(mod\,2\right)$
if $\lambda=0$, i.e. $M_{\mu,k}\equiv0\left(mod\,144\right)$, and
$\forall f_{\mu,k}$ with $\delta\equiv0\left(mod\,36\right)$.

\noindent (i.2) If $d^{2}\equiv1\left(mod\,12\right)$, i.e. $d\equiv1$
or $5\left(mod\,6\right)$, then (\ref{eq:35-3}) reads
\begin{equation}
3f_{\mu,k}^{2}-2\left(3m+\lambda\right)\equiv1\left(mod\,12\right)\label{eq:30-1}
\end{equation}

\noindent (i.2.1) If $f_{\mu,k}^{2}\equiv0$ or $4\left(mod\,12\right)$,
then (\ref{eq:30-1}) reduces to $10\left(3m+\lambda\right)\equiv1\left(mod\,12\right)$
which cannot hold whether $\lambda=0$ or $1$; 

\noindent (i.2.2) if $f_{\mu,k}^{2}\equiv1$ or $9\left(mod\,12\right)$,
then (\ref{eq:30-1}) reduces to $2\left(3m+\lambda\right)\equiv2\left(mod\,12\right)$
$\Rightarrow$ $\left(3m+\lambda\right)\equiv1\left(mod\,6\right)$
which cannot hold if $\lambda=0$ and which yields $m\equiv0\left(mod\,2\right)$
if $\lambda=1$, i.e. $M_{\mu,k}\equiv24\left(mod\,144\right)$, and
$f_{\mu,k}\equiv1\left(mod\,2\right)$ with $\delta\equiv6$ or $30\left(mod\,36\right)$.

\noindent (i.3) If $d^{2}\equiv4\left(mod\,12\right)$, i.e. $d\equiv2$
or $4\left(mod\,6\right)$, then (\ref{eq:35-3}) reads 

\noindent $10\left(3m+\lambda\right)\equiv4\left(mod\,12\right)$
$\Rightarrow$ $\left(3m+\lambda\right)\equiv4\left(mod\,6\right)$
which cannot hold if $\lambda=0$ and which yields $m\equiv1\left(mod\,2\right)$
if $\lambda=1$, i.e. $M_{\mu,k}\equiv96\left(mod\,144\right)$, and
$\forall f_{\mu,k}$ with $\delta\equiv12$ or $24\left(mod\,36\right)$.

\noindent (i.4) If $d^{2}\equiv9\left(mod\,12\right)$, i.e. $d\equiv3\left(mod\,6\right)$,
then (\ref{eq:35-3}) reads
\begin{equation}
3f_{\mu,k}^{2}-2\left(3m+\lambda\right)\equiv9\left(mod\,12\right)\label{eq:31}
\end{equation}

\noindent (i.4.1) If $f_{\mu,k}^{2}\equiv0$ or $4\left(mod\,12\right)$,
then (\ref{eq:31}) reduces to $10\left(3m+\lambda\right)\equiv9\left(mod\,12\right)$
which cannot hold whether $\lambda=0$ or $1$; 

\noindent (i.4.2) if $f_{\mu,k}^{2}\equiv1$ or $9\left(mod\,12\right)$,
then (\ref{eq:31}) reduces to $2\left(3m+\lambda\right)\equiv6\left(mod\,12\right)$
$\Rightarrow$ $\left(3m+\lambda\right)\equiv3\left(mod\,6\right)$
which cannot hold if $\lambda=1$ and which yield $m\equiv1\left(mod\,2\right)$
if $\lambda=0$, i.e. $M_{\mu,k}\equiv72\left(mod\,144\right)$, and
$f_{\mu,k}\equiv1\left(mod\,2\right)$ with $\delta\equiv18\left(mod\,36\right)$.

\noindent (ii) Let now $\delta\equiv3\left(mod\,6\right)$ $\Rightarrow$
$\delta^{2}\equiv9\left(mod\,12\right)$, then only $\left(\eta+\delta\right)^{2}\equiv1$
or $4\left(mod\,12\right)$ hold for, if $\left(\eta+\delta\right)^{2}\equiv0$
or $9\left(mod\,12\right)$, $\Rightarrow$ $\left(\eta+\delta\right)\equiv0$
or $3\left(mod\,6\right)$ $\Rightarrow$ $\eta\equiv3$ or $0\left(mod\,6\right)$
as $\delta\equiv3\left(mod\,6\right)$ and $\left(\eta/\delta\right)$
would not be an irreducible fraction; therefore $\left(\eta+\delta\right)^{2}\equiv1$
or $4\left(mod\,12\right)$.

\noindent Relation (\ref{eq:41-1-1}) yields then the congruence relation
\begin{equation}
9f_{\mu,k}^{2}-M_{\mu,k}\left(\left(\eta+\delta\right)^{2}+3\right)\equiv3\left(mod\,12\right)\label{eq:35-3-1}
\end{equation}

\noindent (ii.1) If $\left(\eta+\delta\right)^{2}\equiv1\left(mod\,12\right)$,
(\ref{eq:35-3-1}) reduces to $9f_{\mu,k}^{2}-4M_{\mu,k}\equiv3\left(mod\,12\right)$
which cannot hold whether $f_{\mu,k}^{2}\equiv0,1,4$ or $9\left(mod\,12\right)$.

\noindent (ii.2) If $\left(\eta+\delta\right)^{2}\equiv4\left(mod\,12\right)$,
(\ref{eq:35-3-1}) yields $9f_{\mu,k}^{2}-7M_{\mu,k}\equiv3\left(mod\,12\right)$; 

\noindent (ii.2.1) if $f_{\mu,k}^{2}\equiv0$ or $4\left(mod\,12\right)$,
it reduces to $M_{\mu,k}\equiv3\left(mod\,12\right)$; 

\noindent (ii.2.2) if $f_{\mu,k}^{2}\equiv1$ or $9\left(mod\,12\right)$,
it reduces to $M_{\mu,k}\equiv6\left(mod\,12\right)$; 

\noindent both cases have to be rejected as $M_{\mu,k}$ cannot be
congruent to $3$ or $6\left(mod\,12\right)$ \cite{key-10-1}.

\noindent (iii) If $\delta\equiv1$ or $2\left(mod\,3\right)$, then
for (\ref{eq:41-1-1}) to hold in integer values of $\eta,\delta,M_{\mu,k}$
and $f_{\mu,k}$, $M_{\mu,k}\equiv2\left(mod\,3\right)$ for the right
hand term of (\ref{eq:41-1-1}) to be integer. As $M_{\mu,k}$ can
only be congruent to $0,1,2,4,9$ or $11\left(mod\,12\right)$, $M_{\mu,k}\equiv2\left(mod\,3\right)$
can only be $M_{\mu,k}\equiv2\left(mod\,12\right)$ (and more precisely
$M_{\mu,k}\equiv2\left(mod\,24\right)$) or $M_{\mu,k}\equiv11\left(mod\,12\right)$
\cite{key-10-1}. It yields then that :

\noindent if $M_{\mu,k}\equiv2\left(mod\,24\right)$, $\left(\left(M_{\mu,k}+1\right)/3\right)\equiv1,9$
or $5\left(mod\,12\right)$ for $M_{\mu,k}\equiv2,26$ or $50\left(mod\,72\right)$,
and 

\noindent if $M_{\mu,k}\equiv11\left(mod\,12\right)$, $\left(\left(M_{\mu,k}+1\right)/3\right)\equiv4,8$
or $0\left(mod\,12\right)$ for $M_{\mu,k}\equiv11,23$ or $35\left(mod\,36\right)$.

\noindent (iii.1) Let $\delta^{2}\equiv1\left(mod\,12\right)$, i.e.
$\delta\equiv1$ or $5\left(mod\,6\right)$, then (\ref{eq:41-1-1})
yields
\begin{equation}
M_{\mu,k}\left(\eta+\delta\right)^{2}+\left(\frac{M_{\mu,k}+1}{3}\right)-f_{\mu,k}^{2}\equiv0\left(mod\,12\right)\label{eq:28}
\end{equation}
and one obtains as follows one of the two solutions :

\noindent Solution 1: $\left(\eta+\delta\right)^{2}\equiv0\left(mod\,12\right)$
$\Rightarrow$ $\left(\eta+\delta\right)\equiv0\left(mod\,6\right)$ 

\noindent $\Rightarrow$ $\eta\equiv5\left(mod\,6\right)$ or $1$
as $\delta\equiv1$ or $5\left(mod\,6\right)$,

\noindent Solution 2: $\left(\eta+\delta\right)^{2}\equiv4\left(mod\,12\right)$
$\Rightarrow$ $\left(\eta+\delta\right)\equiv2$ or $4\left(mod\,6\right)$ 

\noindent $\Rightarrow$ $\eta\equiv1$ or $3\left(mod\,6\right)$
if $\delta\equiv1\left(mod\,6\right)$ and $\eta\equiv3$ or $5\left(mod\,6\right)$
if $\delta\equiv5\left(mod\,6\right)$.

\noindent (iii.1.1) Let $f_{\mu,k}^{2}\equiv0$ or $4\left(mod\,12\right)$,
i.e. $f_{\mu,k}\equiv0\left(mod\,2\right)$, then :

\noindent - if $M_{\mu,k}\equiv2\left(mod\,12\right)$ with $\left(\left(M_{\mu,k}+1\right)/3\right)\equiv1,5$
or $9\left(mod\,12\right)$, (\ref{eq:28}) cannot hold; 

\noindent - if $M_{\mu,k}\equiv11\left(mod\,12\right)$ and

\noindent --- if $\left(\left(M_{\mu,k}+1\right)/3\right)\equiv0\left(mod\,12\right)$
(for $M_{\mu,k}\equiv35\left(mod\,36\right)$), (\ref{eq:28}) yields
Solution 1 for $f_{\mu,k}^{2}\equiv0\left(mod\,12\right)$ and $\left(\eta+\delta\right)^{2}\equiv8\left(mod\,12\right)$,
which cannot hold, for $f_{\mu,k}^{2}\equiv4\left(mod\,12\right)$; 

\noindent --- if $\left(\left(M_{\mu,k}+1\right)/3\right)\equiv4\left(mod\,12\right)$
(for $M_{\mu,k}\equiv11\left(mod\,36\right)$), (\ref{eq:28}) yields
Solution 2 or 1 for $f_{\mu,k}^{2}\equiv0$ or $4\left(mod\,12\right)$; 

\noindent --- if $\left(\left(M_{\mu,k}+1\right)/3\right)\equiv8\left(mod\,12\right)$
(for $M_{\mu,k}\equiv23\left(mod\,36\right)$), (\ref{eq:28}) yields 

\noindent $\left(\eta+\delta\right)^{2}\equiv8\left(mod\,12\right)$,
which cannot hold, for $f_{\mu,k}^{2}\equiv0\left(mod\,12\right)$
and Solution 2 for $f_{\mu,k}^{2}\equiv4\left(mod\,12\right)$.

\noindent (iii.1.2) Let $f_{\mu,k}^{2}\equiv1$ or $9\left(mod\,12\right)$,
i.e $f_{\mu,k}\equiv1\left(mod\,2\right)$, then:

\noindent - if $M_{\mu,k}\equiv2\left(mod\,12\right)$ and

\noindent --- if $\left(\left(M_{\mu,k}+1\right)/3\right)\equiv1\left(mod\,12\right)$
(for $M_{\mu,k}\equiv2\left(mod\,72\right)$), (\ref{eq:28}) yields
Solution 1 or 2 for $f_{\mu,k}^{2}\equiv1$ or $9\left(mod\,12\right)$; 

\noindent --- if $\left(\left(M_{\mu,k}+1\right)/3\right)\equiv5\left(mod\,12\right)$
(for $M_{\mu,k}\equiv50\left(mod\,72\right)$), (\ref{eq:28}) yields
Solution 2 for $f_{\mu,k}^{2}\equiv1\left(mod\,12\right)$ and $\left(\eta+\delta\right)^{2}\equiv2$
or $8\left(mod\,12\right)$, which cannot hold, for $f_{\mu,k}^{2}\equiv9\left(mod\,12\right)$; 

\noindent --- if $\left(\left(M_{\mu,k}+1\right)/3\right)\equiv9\left(mod\,12\right)$
(for $M_{\mu,k}\equiv26\left(mod\,72\right)$), (\ref{eq:28}) yields 

\noindent $\left(\eta+\delta\right)^{2}\equiv2$ or $8\left(mod\,12\right)$,
which cannot hold, for $f_{\mu,k}^{2}\equiv1\left(mod\,12\right)$
and Solution 1 for $f_{\mu,k}^{2}\equiv9\left(mod\,12\right)$. 

\noindent - if $M_{\mu,k}\equiv11\left(mod\,12\right)$ with $\left(\left(M_{\mu,k}+1\right)/3\right)\equiv0,4$
or $8\left(mod\,12\right)$, then (\ref{eq:28}) cannot hold.

\noindent (iii.2) Let now $\delta^{2}\equiv4\left(mod\,12\right)$,
i.e. $\delta\equiv2$ or $4\left(mod\,6\right)$, then (\ref{eq:41-1-1})
yields
\begin{equation}
M_{\mu,k}\left(\eta+\delta\right)^{2}+4\left(\frac{M_{\mu,k}+1}{3}\right)-4f_{\mu,k}^{2}\equiv0\left(mod\,12\right)\label{eq:28-2}
\end{equation}
and one obtains as follows one of the two solutions, both to be rejected
as $\left(\eta/\delta\right)$ would not be an irreducible fraction
:

\noindent Solution 3: $\left(\eta+\delta\right)^{2}\equiv0\left(mod\,12\right)$
$\Rightarrow$ $\left(\eta+\delta\right)\equiv0\left(mod\,6\right)$ 

\noindent $\Rightarrow$ $\eta\equiv4$ or $2\left(mod\,6\right)$
as $\delta\equiv2$ or $4\left(mod\,6\right)$,

\noindent Solution 4: $\left(\eta+\delta\right)^{2}\equiv4\left(mod\,12\right)$
$\Rightarrow$ $\left(\eta+\delta\right)\equiv2$ or $4\left(mod\,6\right)$

\noindent $\Rightarrow$ $\eta\equiv0$ or $2\left(mod\,6\right)$
if $\delta\equiv2\left(mod\,6\right)$ and $\eta\equiv4$ or $2\left(mod\,6\right)$
if $\delta\equiv4\left(mod\,6\right)$.

\noindent (iii.2.1) Let $f_{\mu,k}^{2}\equiv0$ or $9\left(mod\,12\right)$,
then:

\noindent - if $M_{\mu,k}\equiv2\left(mod\,12\right)$ and 

\noindent --- if $\left(\left(M_{\mu,k}+1\right)/3\right)\equiv1\left(mod\,12\right)$,
(\ref{eq:28-2}) yields Solution 4;

\noindent --- if $\left(\left(M_{\mu,k}+1\right)/3\right)\equiv5$
or $9\left(mod\,12\right)$, (\ref{eq:28-2}) cannot hold;

\noindent - if $M_{\mu,k}\equiv11\left(mod\,12\right)$ and 

\noindent --- if $\left(\left(M_{\mu,k}+1\right)/3\right)\equiv0\left(mod\,12\right)$,
(\ref{eq:28-2}) yields Solution 3 

\noindent --- if $\left(\left(M_{\mu,k}+1\right)/3\right)\equiv4$
or $8\left(mod\,12\right)$, (\ref{eq:28-2}) cannot hold.

\noindent (iii.2.2) Let $f_{\mu,k}^{2}\equiv1$ or $4\left(mod\,12\right)$,
then:

\noindent - if $M_{\mu,k}\equiv2\left(mod\,12\right)$ and

\noindent --- if $\left(\left(M_{\mu,k}+1\right)/3\right)\equiv1\left(mod\,12\right)$,
(\ref{eq:28-2}) yields Solution 3;

\noindent --- if $\left(\left(M_{\mu,k}+1\right)/3\right)\equiv5$
or $9\left(mod\,12\right)$, (\ref{eq:28-2}) cannot hold.

\noindent - if $M_{\mu,k}\equiv11\left(mod\,12\right)$ and

\noindent --- if $\left(\left(M_{\mu,k}+1\right)/3\right)\equiv0\left(mod\,12\right)$,
(\ref{eq:28-2}) cannot hold;

\noindent --- if $\left(\left(M_{\mu,k}+1\right)/3\right)\equiv4$
or $8\left(mod\,12\right)$, (\ref{eq:28-2}) yields Solution 3 or
4.

\noindent Therefore, the congruence of Table 1 hold.
\end{proof}
\noindent The following congruent relations for $M_{\mu,k}$ also
hold.
\begin{cor}
\noindent For $\eta,\delta,M_{\mu,k}>1,f_{\mu,k}\in\mathbb{Z}^{+}$,
$k\in\mathbb{Z}$:

\noindent (i) if \textup{$\delta\equiv1$ or $5\left(mod\,6\right)$,
}then\textup{ $M_{\mu,k}\equiv0\left(mod\,\delta^{2}\right)$; }

\noindent \quad{}\enskip{}if\textup{ $\delta\equiv0\left(mod\,6\right)$,
}then\textup{ $M_{\mu,k}\equiv0\left(mod\,\left(\delta^{2}/3\right)\right)$;}

\noindent (ii) if $f_{\mu,k}\equiv1\left(mod\,2\right)$, then, if
\textup{$\delta\equiv1$ or $5\left(mod\,6\right)$, $M_{\mu,k}\equiv0\left(mod\,2\delta^{2}\right)$
}and,\textup{ }

\noindent \quad{}\enskip{}if\textup{ $\delta\equiv0\left(mod\,6\right)$,
}then\textup{ $M_{\mu,k}\equiv0\left(mod\,\left(2\delta^{2}/3\right)\right)$.}\end{cor}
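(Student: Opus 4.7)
The plan is to read the corollary directly off the explicit formula
\[
M_{\mu,k}=\frac{\delta^{2}(3f_{\mu,k}^{2}-1)}{3(\eta+\delta)^{2}+\delta^{2}}
\]
from (\ref{eq:20}), by showing that the denominator (or a suitable factor of it) is coprime to $\delta^{2}$, and then tracking the $2$-adic valuation of $M_{\mu,k}$ when $f_{\mu,k}$ is odd. Throughout I would use that $\gcd(\eta,\delta)=1$ (irreducibility of $\mu=\eta/\delta$) together with the parity of $\eta$ forced by Theorem~1.

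For part (i), set $P=3(\eta+\delta)^{2}+\delta^{2}$, so that $P\,M_{\mu,k}=\delta^{2}(3f_{\mu,k}^{2}-1)$. If $\delta\equiv1$ or $5\pmod{6}$, then for every prime $p\mid\delta$ one has $p\notin\{2,3\}$ and $p\nmid\eta$, whence $P\equiv3\eta^{2}\not\equiv0\pmod{p}$; hence $\gcd(P,\delta^{2})=1$ and $\delta^{2}\mid M_{\mu,k}$. If $\delta\equiv0\pmod{6}$, write $\delta=6d$; a short expansion yields $P=3Q$ with $Q=\eta^{2}+12\eta d+48d^{2}$, so $M_{\mu,k}=12d^{2}(3f_{\mu,k}^{2}-1)/Q$. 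Using $\gcd(\eta,6d)=1$, the congruence $Q\equiv\eta^{2}$ modulo each of $4$, $3$ and any prime dividing $d$ shows $\gcd(Q,12d^{2})=1$; hence $Q\mid(3f_{\mu,k}^{2}-1)$ and $\delta^{2}/3=12d^{2}$ divides $M_{\mu,k}$.

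For part (ii), assume $f_{\mu,k}$ is odd, so $f_{\mu,k}^{2}\equiv1\pmod{8}$ and $3f_{\mu,k}^{2}-1\equiv2\pmod{8}$ has $2$-adic valuation exactly $1$. When $\delta\equiv1$ or $5\pmod{6}$, both $\delta$ and (by Theorem~1) $\eta$ are odd, so $\eta+\delta$ is even and $P$ is odd; the $2$-adic valuation of $M_{\mu,k}$ is therefore $1$, and combined with $\delta^{2}\mid M_{\mu,k}$ from (i) this forces $M_{\mu,k}/\delta^{2}$ to be even, giving $2\delta^{2}\mid M_{\mu,k}$. When $\delta\equiv0\pmod{6}$, $\eta$ is odd while $\delta$ is even, so $\eta+\delta$ is odd and $Q$ is odd; the identity $M_{\mu,k}/(12d^{2})=(3f_{\mu,k}^{2}-1)/Q$ then exhibits an integer equal to (even)/(odd), hence an even integer, so $2\delta^{2}/3=24d^{2}\mid M_{\mu,k}$.

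The only mildly delicate step is the factor-of-$3$ extraction in the case $\delta\equiv0\pmod{6}$: one must first notice that $P$ is automatically divisible by $3$ after the substitution $\delta=6d$, so that the coprimality argument applies to $Q$ instead of $P$. Once this is in place, all remaining steps are short modular-arithmetic checks and one $2$-adic valuation count.
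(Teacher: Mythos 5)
Your proposal is correct and follows essentially the same route as the paper: part (i) rests on the observation that, since $\gcd(\eta,\delta)=1$ and $3\nmid\delta$ (resp.\ $\delta=6d$), the denominator $3(\eta+\delta)^{2}+\delta^{2}$ (resp.\ its cofactor $Q$ after extracting the factor $3$) is coprime to $\delta^{2}$ (resp.\ to $12d^{2}$) and so must divide $3f_{\mu,k}^{2}-1$. Your part (ii) simply makes explicit, via the $2$-adic valuation of $3f_{\mu,k}^{2}-1$ and the parity of the denominator, what the paper dismisses as ``immediate from (i) and Theorem 1,'' and your write-up is the more carefully justified of the two.
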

\begin{proof}
\noindent For $\eta,\delta,M_{\mu,k}>1,f_{\mu,k}\in\mathbb{Z}^{+}$,
$k\in\mathbb{Z}$:

\noindent (i) from (\ref{eq:20}), as $\gcd\left(\eta,\delta\right)=1$
and as $M_{\mu,k}$ must be integer, if $\delta\equiv1$ or $5\left(mod\,6\right)$,
then $\left(3\left(\eta+\delta\right)^{2}+\delta^{2}\right)$ must
divide $\left(3f_{\mu,k}^{2}-1\right)$, therefore $M_{\mu,k}\equiv0\left(mod\,\delta^{2}\right)$;
if $\delta\equiv0\left(mod\,6\right)$, then $\left(\left(\eta+\delta\right)^{2}+\left(\delta^{2}/3\right)\right)$
must divide $\left(3f_{\mu,k}^{2}-1\right)$, therefore $M_{\mu,k}\equiv0\left(mod\,\left(\delta^{2}/3\right)\right)$;

\noindent (ii) immediate from (i) and Theorem 1.
\end{proof}

\section{Conclusions}

\noindent For pairs $\left(a_{1,\mu,k},a_{2,\mu,k}\right)$ of $a$
values, the sums of $M_{\mu,k}$ consecutive squared integers starting
with $a_{1,\mu,k}$ or $a_{2,\mu,k}$ are always equal to squared
integers $s_{1,\mu,k}^{2}$ or $s_{2,\mu,k}^{2}$ $\forall k\in\mathbb{Z}$
if $M_{\mu,k}=\delta^{2}\left(3\left(a_{2,\mu,k}-a_{1,\mu,k}\right)^{2}-1\right)/\left(3\left(\eta+\delta\right)^{2}+\delta^{2}\right)$
with $\mu=\left(\eta/\delta\right)\in\mathbb{\mathbb{Q}}^{+}$. It
was proved that $\eta\equiv1\left(mod\,2\right)$ and $\delta\equiv0,1$
or $5\left(mod\,6\right)$ and in addition :

\noindent - if $\delta\equiv0\left(mod\,6\right)$, then $M_{\mu,k}\equiv0\left(mod\,12\right)$
(more precisely, $M_{\mu,k}\equiv0,24,72$ or $96\left(mod\,144\right)$),
and if $\delta\equiv1$ or $5\left(mod\,6\right)$, depending on $a_{1,\mu,k}$
and $a_{2,\mu,k}$ having different or same parities, then $M_{\mu,k}\equiv2$
or $11\left(mod\,12\right)$ (more precisely, $M_{\mu,k}\equiv2,26$
or $50\left(mod\,72\right)$ or $M_{\mu,k}\equiv11,23$ or $35\left(mod\,36\right)$);

\noindent - if $\delta\equiv1$ or $5\left(mod\,6\right)$, then $M_{\mu,k}\equiv0\left(mod\,\delta^{2}\right)$
and, if in addition $a_{1,\mu,k}$ and $a_{2,\mu,k}$ have different
parities, $M_{\mu,k}\equiv0\left(mod\,2\delta^{2}\right)$ ; if $\delta\equiv0\left(mod\,6\right)$,
then $M_{\mu,k}\equiv0\left(mod\,\left(\delta^{2}/3\right)\right)$
and, if in addition $a_{1,\mu,k}$ and $a_{2,\mu,k}$ have different
parities, $M_{\mu,k}\equiv0\left(mod\,\left(2\delta^{2}/3\right)\right)$.

\end{document}